\newtheorem{theorem}{Theorem}
\newtheorem{lemma}[theorem]{Lemma}
\newtheorem{definition}{Definition}
\newtheorem*{remark}{Remark}
\newcommand{\isep}{\!\mathrel{{.}{.}\!}\nobreak}
\newcommand{\B}{\mathfrak{B}}
\newcommand{\C}{\mathbb{C}}
\newcommand{\E}{\mathbb{E}}
\newcommand{\R}{\mathbb{R}}
\newcommand{\G}{\mathcal{G}}
\newcommand{\MP}{\mathrm{MP},y}
\newcommand{\Op}{\Omega_p}
\renewcommand{\l}{\ell}
\newcommand{\Als}{A_\l(s)}
\newcommand{\Almp}{A_{\l,\MP}}
\newcommand{\Tr}{\textbf{Tr}}
\newcommand{\Var}{\mathrm{Var}}
\newcommand{\plp}{\Pi_{\l,p}}
\newcommand{\ga}{\gamma}
\newcommand{\Ga}{\Gamma}
\newcommand{\Om}{\Omega}
\newcommand{\om}{\omega}
\newcommand{\ogs}{\om_\ga(s)}
\newcommand{\ogos}{\om_{\ga_1}(s)}
\newcommand{\ogts}{\om_{\ga_2}(s)}
\newcommand{\Sp}{\Sigma_p}
\newcommand{\plsp}{\plp/\Sp}
\newcommand{\plsps}{\plp^2/\Sp}
\newcommand{\Vg}{V_\ga}
\newcommand{\vg}{v_\ga}
\newcommand{\lga}{\l_\ga}
\newcommand{\ov}{\Om(\Vg)}
\newcommand{\Wg}{W_\ga}
\newcommand{\Wgo}{W_{\ga_1}}
\newcommand{\Wgt}{\overline{W}_{\ga_2}}
\newcommand{\Wgu}{W_{\ga_1,\ga_2}}
\newcommand{\Vgo}{V_{\ga_1}}
\newcommand{\Vgt}{V_{\ga_2}}
\newcommand{\Vgu}{V_{\ga_1 \cup \ga_2}}
\newcommand{\Vgi}{V_{\ga_1 \cap \ga_2}}
\newcommand{\vgo}{v_{\ga_1}}
\newcommand{\vgt}{v_{\ga_2}}
\newcommand{\vgu}{v_{\ga_1 \cup \ga_2}}
\newcommand{\vgi}{v_{\ga_1 \cap \ga_2}}
\newcommand{\ovu}{\Om(\Vgu)}
\renewcommand{\d}{\mathrm{d}}
\begin{document}

\title{Spectral distribution of random matrices from mutually unbiased bases}
\author{Chin Hei Chan\thanks{C. Chan is at the Dept. of Mathematics, Hong Kong University of Science and Technology, Clear Water Bay, Kowloon, Hong Kong (email: chchanam@connect.ust.hk).} and Maosheng Xiong\thanks{M. Xiong is at the Dept. of Mathematics, Hong Kong University of Science and Technology, Clear Water Bay, Kowloon, Hong Kong (email: mamsxiong@ust.hk).}
}

\date{}

\maketitle
\begin{abstract}

We consider the random matrix obtained by picking vectors randomly from a large collection of mutually unbiased bases of $\C^n$, and prove that the spectral distribution converges to the Marchenko-Pastur law. This shows that vectors in mutually unbiased bases behave like random vectors. This phenomenon is similar to that of binary linear codes of dual distance at least 5, which was studied in previous work.

\end{abstract}

\section{Introduction}


Random matrix theory is the study of matrices whose entries are random variables. Of particular interest is the study of eigenvalue statistics of random matrices such as the empirical spectral distribution. This has been broadly investigated in a wide variety of areas, including statistics \cite{Wis}, number theory \cite{MET}, economics \cite{econ}, theoretical physics \cite{Wig} and communication theory \cite{TUL}.

Most of the matrix models considered in the literature were matrices whose entries are independent random variables. In a series of papers (see \cite{Tarokh2,OQBT}), initiated in \cite{Tarokh1}, the authors studied the behaviour of sample-covariance type matrices formed by randomly selecting codewords from binary linear codes, and among many other things, they proved that such matrices behave like truly random matrices with respect to the empirical spectral distribution, as long as the minimum Hamming distance of the dual code is at least 5. More precisely, the limiting spectral distribution converges to the Marchenko-Pastur (MP) law. This result can be considered as a joint randomness test on sequences derived from binary linear codes. It is called a ``group randomness'' property of the code and may have potential applications, for example, in efficiently generating random matrices by using binary linear codes of dual distance at least 5. 

Since these work, some other aspects of group randomness properties of linear codes have been studied and interesting results have been obtained in recent years. For example, \cite{Tarokh3} studied the matrix which is the product of pseudorandom matrices arising from two different linear codes; \cite{Sol1,Sol2} studied the Wigner type matrix generated from from $m$-sequences and BCH codes of large dual distance respectively; \cite{CSC} studied matrices generated from linear codes as before but normalized differently. In all these cases, simple conditions were found in terms of some parameters of the linear codes that ensure that the matrices obtained behave like random matrices of independent entries with respect to the empirical spectral distribution.


In this paper we study the group randomness property of mutually unbiased bases (MUBs). A collection of orthonormal bases $B_1,\ldots, B_m$ of the vector space $\C^n$ ($m,n \ge 2$) is called mutually unbiased if for any $v_i \in B_i, v_j \in B_j$ where $i \ne j$, we have
\begin{eqnarray} \label{1:eq1}
\left|\langle v_i,v_j \rangle\right| = \frac{1}{\sqrt{n}}\, .
\end{eqnarray}
Here $\langle v_i,v_j \rangle$ is the standard Hermitian inner product on the complex vector space $\C^n$.

The notion of MUBs emerged in the literature of quantum mechanics from the work of Schwinger \cite{SCH} and has found important applications in quantum information theory, in particular in quantum state determination \cite{IVO} and in quantum cryptography \cite{BEC,BEN,BRU,SCH}. MUBs are also closely related to many other combinatorial objects such as spherical 2-design \cite{Gsl, KLA1}, semifields \cite{Gsl}, orthogonal Latin squares \cite{Hall,Woc} and planar functions \cite{Hall} etc.

Denote by $N(n)$ the maximal number of orthonormal bases of $\C^n$ which are MUBs. It is well-known $N(n) \le n+1$ (\cite{BAN,DEL,HOG,KAB,WOO}). The extremal set that achieves the equality is called complete MUBs and has important applications in quantum computation \cite{IVO}. So far it is known that $N(n)=n+1$ when $n$ is a prime power (see \cite{BAN,IVO,WOO}), and some explicit constructions can be seen in \cite{KLA}. However, very little is know about the exact value of $N(n)$ if $n$ is not a prime power. In particular, even the value $N(6)$ is not known. 

It is conceivable that vectors in MUBs of $\C^n$ are in general positions and spread quite uniformly on the unit sphere $\mathbb{S}^{2n-1}$, hence they should satisfy some group randomness properties. In this paper, we prove that this is indeed the case with respect to the empirical spectral distribution.

\subsection{Statement of the main result}
To state the main result, we need some notation.

For an $n \times n$ matrix $\mathbf{A}$, let $\lambda_1,\ldots, \lambda_n$ be its eigenvalues. The \emph{spectal measure} of $\mathbf{A}$ is given by
\[\mu_{\mathbf{A}}:=\frac{1}{n}\sum_{i=1}^n \delta_{\lambda_i},\]
where $\delta_z$ is the Dirac measure at $z$. The empirical spectral distribution of $\mathbf{A}$ is defined as
\[F_{\mathbf{A}}(x):=\int_{-\infty}^{x} \mu_{\mathbf{A}} \,(\d x). \]
The main result of this paper is as follows.
\begin{theorem}\label{thmMP}
Let $B_1,\ldots,B_m$ be \emph{MUBs} of $\C^n$ with $m \ge \sqrt{n}$. Denote $\B=\sqcup_{i=1}^m B_i$. Choosing $p$ vectors uniformly and independently from $\B$, we obtain a $p \times n$ random matrix $\Phi_n$. Denote by $F_{\G_n}$ the  empirical spectral distribution of the Gram matrix $\G_n:=\Phi_n\Phi_n^*$, where $\Phi_n^*$ is the conjugate transpose of $\Phi_n$. Then for any $x \in \R$, as $n \to \infty$ with $y=\frac{p}{n} \in (0,1)$ fixed, we have
$$ F_{\G_n}(x) \to F_{\MP}(x) \quad \textbf{in probability}.$$
Here $F_{\MP}$ is the cumulative distribution function of the Marchenko-Pastur law whose density function is given by
\begin{equation*}\label{mppdf}
f_{\MP}(x)=\frac{1}{2\pi xy}\sqrt{(b-x)(x-a)}\, \mathbbm{1}_{[a,b]}\, \d x\,,
\end{equation*}
and the constants $a$ and $b$ are defined as
\begin{equation*}\label{ab}
a=(1-\sqrt{y})^2, \quad b=(1+\sqrt{y})^2,
\end{equation*}
and $\mathbbm{1}_{[a,b]}$ is the indicator function of the interval $[a,b]$.
\end{theorem}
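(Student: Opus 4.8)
The plan is to establish convergence of the empirical spectral distribution via the method of moments. Since $F_{\MP}$ is determined by its moments, and the Marchenko--Pastur law with ratio $y$ has $k$-th moment $\sum_{r=0}^{k-1} \frac{1}{r+1}\binom{k}{r}\binom{k-1}{r} y^r$, it suffices to show that for every fixed integer $k \ge 1$,
$$ \frac{1}{n}\Tr\left(\G_n^k\right) \to \int x^k \, \d F_{\MP}(x) \quad \text{in probability}. $$
By a standard truncation/variance argument (showing $\Var(\frac1n\Tr \G_n^k) \to 0$ and computing $\E[\frac1n\Tr\G_n^k]$), this reduces to two tasks: (i) compute the limit of the expected moments, and (ii) control the variance. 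I would write $\frac1n\Tr(\G_n^k)$ as a sum over closed walks: indexing the $p$ chosen vectors as $\phi_1,\dots,\phi_p$ (rows of $\Phi_n$, each a uniformly random element of $\B$), one has
$$ \Tr(\G_n^k) = \sum_{i_1,\dots,i_k} \langle \phi_{i_1},\phi_{i_2}\rangle \langle \phi_{i_2},\phi_{i_3}\rangle \cdots \langle \phi_{i_k},\phi_{i_1}\rangle, $$
and similarly $\Tr(\Phi_n^*\Phi_n)^k$ expands as a sum over closed walks alternating between the $p$ row-indices and the $n$ column-indices.

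**The key mechanism** — and this is where the MUB hypothesis enters, playing the role that ``dual distance $\ge 5$'' plays for linear codes — is the following. When we expand the expected trace, the dominant contribution comes from tuples $(i_1,\dots,i_k)$ whose index coincidence pattern corresponds to a non-crossing pair partition (a rooted plane tree), exactly as in the classical Wishart/Marchenko--Pastur computation, and these terms contribute the correct MP moments in the limit. Every other coincidence pattern must be shown to be negligible. For distinct chosen vectors $v_i \in B_{a}$, $v_j \in B_{b}$, the MUB property gives $|\langle v_i, v_j\rangle|^2 = 1/n$ if $a \ne b$ and $=0$ if $a = b$ (distinct vectors in the same orthonormal basis). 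Thus each ``off-diagonal'' inner product contributes a factor of size $n^{-1/2}$, and one counts powers of $n$ from (a) the number of free summation indices and (b) the number of inner-product factors, against the normalization $1/n$ and the probability $\sim n^{-(\#\text{distinct indices})}$ that a given coincidence pattern occurs. The condition $m \ge \sqrt n$ enters in ensuring that, when two independently chosen vectors are compared, the probability they land in the same basis (forcing orthogonality, or forcing equality) is small enough — of order $1/m \le n^{-1/2}$ — so that the ``same-basis'' corrections do not accumulate; this is the analogue of the codeword-collision estimates in \cite{Tarokh1,OQBT}. Carrying this out requires a careful bookkeeping lemma: for a closed walk on the bipartite (row/column) index set with a given quotient graph, the combined exponent of $n$ coming from free indices, inner-product magnitudes, probabilities, and the normalization is $\le 0$, with equality precisely for the non-crossing pair partitions; and moreover the cross terms have expectation with a definite phase cancellation so only the squared-modulus terms survive.

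**For the variance bound**, I would expand $\E\big[(\frac1n\Tr\G_n^k)^2\big] - \big(\E[\frac1n\Tr\G_n^k]\big)^2$ as a sum over pairs of closed walks and show that the only surviving terms in the limit are those where the two walks are ``independent'' (share no chosen vector), which cancels against the square of the mean; any genuine coupling between the two walks forces extra index coincidences and hence an extra negative power of $n$. Because the $p$ rows are chosen independently, this is a routine — if notationally heavy — extension of the first-moment analysis, and gives $\Var(\frac1n\Tr\G_n^k) = O(n^{-1})$, which by Chebyshev upgrades convergence in expectation to convergence in probability. Finally, convergence of all moments of $F_{\G_n}$ to those of $F_{\MP}$, together with the fact that $F_{\MP}$ is compactly supported hence moment-determinate, yields $F_{\G_n}(x) \to F_{\MP}(x)$ in probability at every continuity point $x$, which is all of $\R$ since $F_{\MP}$ is continuous.

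**The main obstacle** I anticipate is the combinatorial heart of step (i): precisely classifying which coincidence patterns of the walk indices are ``dominant'' and proving the strict suboptimality of all others, while simultaneously handling the two distinct failure modes peculiar to MUBs — two chosen vectors being \emph{equal} (so the inner product is $1$, not $n^{-1/2}$, changing the power count) and two distinct chosen vectors lying in the \emph{same} basis (inner product $0$, killing a term). Getting the $n$-exponent bookkeeping airtight across all these cases, and isolating exactly where $m \ge \sqrt n$ is needed, is the technically delicate part; the variance estimate and the moment-problem conclusion are comparatively mechanical.
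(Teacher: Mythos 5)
Your high-level plan matches the paper's: expand $\Tr(\G_n^\l)$ over closed walks on the $p$ row-indices, single out the ``tree'' walks as the dominant ones yielding the Marchenko--Pastur moments, bound the rest, and control the variance by a second-moment walk-pair expansion. But there is one genuinely missing idea, and it is the crux of the proof. Your mechanism for dismissing non-tree walks is pure power-counting plus a vague ``definite phase cancellation.'' The paper's actual mechanism is the completeness relation for each orthonormal basis: for any $u,v$ and any $k$,
\[
\sum_{w \in B_k}\langle u, w\rangle\langle w, v\rangle = \langle u, v\rangle,
\]
so averaging over a uniformly random $w \in \B = \sqcup_k B_k$ gives $\frac{1}{mn}\sum_{w\in\B}\langle u,w\rangle\langle w,v\rangle = \frac{1}{n}\langle u,v\rangle$ \emph{exactly}. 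This is the MUB analogue of ``mean-zero off-diagonal entries'' in the i.i.d.\ case. It lets you contract any vertex of the walk that is visited exactly once, pulling out a factor $1/n$, and it is applied recursively: a walk whose skeleton is a tree traversed twice contracts all the way to a single point, giving the exact value $W_\ga = n^{1-\vg}$ (not just the right order of magnitude, which is what you need to recover the precise MP moment coefficients). Without this identity you cannot evaluate the leading term, only bound it, and you cannot reduce a general walk to a ``reduced'' one on which the magnitude bound $|\langle v_1,v_2\rangle| \le n^{-1/2}$ and $m \ge \sqrt n$ can be brought to bear.

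Two smaller points. First, the way $m \ge \sqrt n$ actually enters is different from your heuristic: after contracting all singly-visited and consecutive-equal vertices, the paper estimates the residual $W_{\widetilde\ga}$ for the reduced walk by stratifying over $N_{s,\ga}$, the number of distinct chosen vectors among the $\vg$ slots, and using a graph-theoretic bound $C_{s,\ga} \ge \max\{N_{s,\ga},\,3N_{s,\ga}-\vg\}$ on the number of non-trivial inner-product factors; $m \ge \sqrt n$ is then exactly what is needed so that the middle range $2 \le N_{s,\ga} \le \vg/2$ does not dominate. Your ``probability of same-basis collision is $O(1/m)$'' intuition is in the right spirit but is not the estimate that does the work. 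Second, a minor normalization slip: the $\l$-th moment of the empirical spectral distribution of the $p\times p$ matrix $\G_n$ is $\frac{1}{p}\Tr(\G_n^\l)$, not $\frac{1}{n}\Tr(\G_n^\l)$; with $y = p/n$ fixed the difference is only a constant, but it would throw off the matching with the MP moment formula if carried through literally.
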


\begin{remark} 1). It is well-known that as the dimension grows to infinity, the empirical spectral distribution of the Gram matrix of real i.i.d. random matrices follows the Marchenko-Pastur law \cite{Ander, MET}, the above result can be interpreted as a joint randomness test for vectors in \emph{MUBs}. This is similar to \cite{Tarokh2, OQBT} where random matrices from binary linear codes were considered.


2). The notion of approximately mutually unbiased bases (or \emph{AMUBs}) was introduced by Shparlinski and Winterhof \cite{Shpar} by relaxing equality (\ref{1:eq1}) to
\begin{eqnarray} \label{1:eq2}
\left|\langle v_i,v_j \rangle\right| = O\left(\sqrt{\frac{\log n}{n}}\right)\, .
\end{eqnarray}
They showed that there are $n+1$ \emph{AMUBs} for any $n$ by using exponential sums. They also constructed $n+1$ \emph{AMUBs} where the norm of the inner product in (\ref{1:eq2}) is replaced by $O\left(\frac{1}{\sqrt{n}}\right)$ for almost all dimensions $n$, and their construction can be extended to all dimensions $n$ by assuming certain conjectures about the gap between consecutive primes. Some other variants of \emph{AMUBs} have been studied in \cite{Cao, LiJ, Wang}. It can be seen that Theorem \ref{thmMP} also holds true for \emph{AMUBs}. For the sake of simplicity, however, in the paper we only consider \emph{MUBs}.
\end{remark}

For the proof of Theorem \ref{thmMP}, we use the moment method: we compute all the $\l$-th moments and the variance of the spectral distribution (Theorems \ref{momentMP} and \ref{VarMP}) and compare them with the Marchenko-Pastur law. The main computation relies crucially on a technical counting result (Lemma \ref{4:wglem}), for which the desired estimate is obtained by a graph method. This is similar to \cite{OQBT} where a graph method was used to obtain a crucial estimate by exploiting the algebraic property of binary linear codes with dual distance at least 5. In this paper, the special structure of MUBs will play important roles in all the proofs.

This paper is organized as follows: in Section \ref{secMP} we outline the moment method and compute the $\l$-th moment of the special distribution (Theorem \ref{momentMP}), and in Section \ref{secVarMP} we compute the variance (Theorem \ref{VarMP}). This would finish the proof of Theorem \ref{thmMP} directly. A technical counting result (Lemma \ref{4:wglem}) was applied in both Section  \ref{momentMP} and Section \ref{secVarMP}. To streamline the presentation of this paper, we postpone the proof of this technical result to {Appendix}.


\section{The moment method}\label{secMP}
Recall that $\B=\sqcup_{i=1}^m B_i$ where $B_1,\ldots, B_m$ are MUBs of $\C^n$ where $m \geq \sqrt{n}$. Denote by $\Op$ the set of all maps $s: [1\isep p] \to \B$. $\Op$ is a probability space endowed the uniform probability, corresponding to selecting p vectors from $\B$ uniformly and independently. Here $[1\isep p]$ denotes the set of all integers between 1 and $p$. It is easy to see that $\#\Op=(mn)^p$.

For each $s \in \Op$, the corresponding $p \times n$ matrix $\Phi(s)$ is given by
\begin{equation*}\label{Phi}
\Phi(s)^*=\left[s(1)^*, s(2)^*, \ldots, s(p)^* \right].
\end{equation*}
Here each $s(i)$ is a row vector and $s(i)^*$ is the conjugate transpose of $s(i)$. Denote
\begin{equation*}\label{G}
\G(s)=\Phi(s)\Phi(s)^*.
\end{equation*}
This is a $p \times p$ Hermitian matrix whose $(i,j)$-th entry is given by $\langle s(i),s(j)\rangle$.

Let $\lambda_1(s),\lambda_2(s),\cdots,\lambda_p(s)$ be the eigenvalues of $\G(s)$. Given any positive integer $\l$, define
\begin{eqnarray} \label{2:al} \Als:=\frac{1}{p}\sum_{i=1}^p \lambda_i(s)^\l=\frac{1}{p}\Tr\left(\G(s\right)^\l).
\end{eqnarray}
This is the $\l$-th moment of the empirical spectral distribution of $\G(s)$. Here $\Tr\left(\G(s)^\l\right)$ is the trace of the matrix $\G(s)^\l$.

Denote by $\E(\cdot,\Op)$ and by $\Var(\cdot,\Op)$ the expectation and variance of a random variable in the probability space $\Op$ respectively. To prove Theorem \ref{thmMP}, it suffices to prove the following two statements (see \cite{RMT}):

(i) $\E(\Als,\Op) \to \Almp$ as $n \to \infty$, where $\Almp$ is the $\l$-th moment of the corresponding Marchenko-Pastur law $F_{\mathrm{MP},y}$, which is given by (see \cite{RMT})
$$\Almp=\sum_{i=0}^{\l-1}\frac{y^i}{i+1}\binom{\l}{i}\binom{\l}{i-1};$$

(ii) $\Var(\Als,\Op) \to 0$ as $n \to \infty$.

Actually, we prove the following:
\begin{theorem}\label{momentMP}
For any fixed positive integer $\l$, we have
\begin{eqnarray} \label{2:thm2eq}
\E\left(\Als,\Op\right)=\sum_{i=0}^{\l-1}\frac{y^i}{i+1}\binom{\l}{i}\binom{\l}{i-1}+O_\l\left(\frac{1}{m}+\frac{1}{n}\right).\end{eqnarray}
Here the constant implied in the big $O_{\l}$-symbol depends only on $\l$.
\end{theorem}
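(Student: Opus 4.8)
The plan is to expand the trace $\Tr(\G(s)^\l)$ combinatorially and take the expectation over $\Op$. Writing out the matrix product, we have
\[
p\,\Als = \Tr\left(\G(s)^\l\right) = \sum_{i_1,\ldots,i_\l \in [1\isep p]} \langle s(i_1),s(i_2)\rangle\,\langle s(i_2),s(i_3)\rangle\cdots\langle s(i_\l),s(i_1)\rangle .
\]
So $\E(\Als,\Op)$ is $\frac{1}{p}$ times a sum over closed walks $i_1 \to i_2 \to \cdots \to i_\l \to i_1$ of length $\l$ on the vertex set $[1\isep p]$ of the expectation $\E\big[\prod_{t} \langle s(i_t),s(i_{t+1})\rangle\big]$, where the vectors $s(i_1),\ldots,s(i_\l)$ are chosen uniformly and independently from $\B$ — but with repetitions whenever the indices $i_t$ coincide. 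First I would organize the closed walks by the partition of $\{i_1,\ldots,i_\l\}$ they induce (equivalently, by the underlying multigraph on the distinct vertices), as in the classical moment computation for Wishart matrices; the number of walks realizing a given combinatorial "shape" is $p^{\,(\text{number of distinct vertices})}(1+O(1/p))$, and since $p = yn$ this contributes the appropriate power of $y$.

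The heart of the matter is evaluating, for a fixed assignment of distinct vectors $w_1,\ldots,w_k$ (the distinct values among the $s(i_t)$, chosen i.i.d. uniformly from $\B$), the expectation of the corresponding product of inner products $\prod \langle w_{a},w_{b}\rangle$ over the edges of the walk. Here the MUB structure enters decisively: conditioned on which basis each $w_j$ lies in, if two distinct vectors $w_a, w_b$ lie in \emph{different} bases then $|\langle w_a,w_b\rangle| = 1/\sqrt{n}$ exactly, while if they lie in the same basis the inner product is $0$ or $1$ (orthonormality). The key point — and this is where I would invoke Lemma \ref{4:wglem} — is that for the product over a closed walk to have nonzero expectation, the phases must cancel, and a graph-theoretic analysis of which walk-shapes survive shows that the dominant contribution comes exactly from walks whose underlying graph is a tree traversed with each edge used twice (a "double tree"), in which case each edge pair contributes $\E|\langle w_a,w_b\rangle|^2 = 1/n$ and the two factors of $1/\sqrt n$ combine with one free vertex sum to give a net $O(1)$ contribution; every other shape is shown, via Lemma \ref{4:wglem} and the bound $m \ge \sqrt n$, to be smaller by a factor $O(1/m + 1/n)$. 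Counting the double-tree shapes of a closed walk of length $\l$ on $i+1$ distinct vertices is the classical Narayana/Catalan count, which reproduces $\sum_{i=0}^{\l-1}\frac{y^i}{i+1}\binom{\l}{i}\binom{\l}{i-1}$.

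I expect the main obstacle to be the bookkeeping that separates the main term from the error: one must show that \emph{every} walk-shape other than a double tree — including shapes with cycles, shapes where an edge is used an odd number of times, and shapes where distinct vertices happen to lie in the same basis — contributes at most $O_\l(1/m + 1/n)$ after summing over the $p$ vertices and over the basis-assignments. The two regimes to control are (a) edges within a common basis, where orthonormality forces the walk to degenerate and costs a factor $1/n$ per "extra" coincidence while possibly introducing a factor $m$ from the basis choice — this is exactly where $m \ge \sqrt n$ is used to ensure $m/n^{?}$ stays bounded; and (b) edges across bases whose phases $e^{i\theta}$ must cancel in the product, where the uniform distribution on $\B$ and the independence of the $s(i_t)$ make the expectation vanish unless the phase pattern is trivial. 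Packaging both of these uniformly in the walk-shape is precisely the content of the technical Lemma \ref{4:wglem}, whose graph method (analogous to \cite{OQBT}) I would invoke as a black box here, so that the proof of Theorem \ref{momentMP} reduces to the shape-counting bookkeeping sketched above.
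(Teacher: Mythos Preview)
Your proposal is correct and follows essentially the same route as the paper: expand $\Tr(\G(s)^\l)$ over closed walks, group walks by their combinatorial shape (the paper does this via the $\Sigma_p$-action on $\Pi_{\l,p}$), invoke Lemma~\ref{4:wglem} as a black box to get $W_\gamma=n^{1-v_\gamma}$ exactly on double trees and $W_\gamma=O_\l(n^{1-v_\gamma}(1/m+1/n))$ otherwise, and finish with the Narayana count $\#\{\gamma\in\Gamma_\l:v_\gamma=v\}=\frac{1}{v}\binom{\l}{v-1}\binom{\l-1}{v-1}$. One small remark: your heuristic that the double-tree main term comes from ``each edge pair contributes $\E|\langle w_a,w_b\rangle|^2=1/n$'' is numerically right, but the paper's mechanism (in the Appendix) is cleaner---it uses the resolution of the identity $\sum_{v\in B_k}\langle x,v\rangle\langle v,y\rangle=\langle x,y\rangle$ to peel off leaves exactly, which is why $W_\gamma=n^{1-v_\gamma}$ holds with no error term on $\Gamma_\l$.
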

\begin{theorem}\label{VarMP}
For any fixed positive integer $\l$, we have
$$\Var(\Als,\Op)=O_\l\left(\frac{1}{mn}+\frac{1}{n^2}\right).$$
\end{theorem}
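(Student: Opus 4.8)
The plan is to prove Theorem \ref{VarMP} by expanding the variance as a sum over pairs of closed walks and exploiting the combinatorial structure that underlies the proof of Theorem \ref{momentMP}. Concretely, writing $\Var(\Als,\Op)=\E(\Als^2,\Op)-\E(\Als,\Op)^2$, one has
\begin{eqnarray*}
\E\left(\Als^2,\Op\right)=\frac{1}{p^2}\sum \E\Big[\prod_{a=1}^{\l}\langle s(i_a),s(i_{a+1})\rangle \prod_{b=1}^{\l}\langle s(j_b),s(j_{b+1})\rangle\Big],
\end{eqnarray*}
where the sum runs over all pairs of cyclic sequences $(i_1,\dots,i_\l,i_1)$ and $(j_1,\dots,j_\l,j_1)$ in $[1\isep p]$, and $\E(\Als,\Op)^2$ is the analogous sum in which the two blocks of indices are forced to use disjoint copies of the probability space (i.e. the expectation factors). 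First I would set up, exactly as in the moment computation of Section \ref{secMP}, the encoding of each term by its associated multigraph on the vertex set $\{i_1,\dots,i_\l\}\cup\{j_1,\dots,j_\l\}$, where a non-backtracking count of edges and vertices controls the exponent of $1/n$ and $1/m$ coming from the MUB inner-product identity \eqref{1:eq1} together with the averaging over $\B$.

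The key observation is the standard variance cancellation: a pair of walks whose index sets $\{i_a\}$ and $\{j_b\}$ are \emph{disjoint as sets of vertices} contributes to $\E(\Als^2,\Op)$ a quantity that is, up to lower-order error, exactly matched by the corresponding term in $\E(\Als,\Op)^2$, because over $\Op$ the choices of $s$ on the two index blocks are independent. Hence the surviving terms in $\Var(\Als,\Op)$ come only from pairs of walks whose combined graph is \emph{connected} — the two cycles must share at least one vertex. For such connected configurations, a connected multigraph on $v$ vertices with $e$ edges (counted without multiplicity) satisfies $e\ge v-1$, and since the total edge-length budget of the two $\l$-cycles is $2\l$, a connected pair that is not a single ``doubled'' tree structure necessarily loses an extra factor relative to the leading-order count. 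The plan is to show, via the graph method already invoked for Lemma \ref{4:wglem} (and reusing the bookkeeping of Theorem \ref{momentMP}), that every connected pair contributes $O_\l(1/(mn)+1/n^2)$, and that there are only $O_\l(p^{\,2\l-1})=O_\l(n^{2\l-1})$ such index configurations, while the normalization $1/p^2$ together with the per-term bound of order $n^{-(2\l-1)}\cdot(\text{loss})$ yields the claimed $O_\l(1/(mn)+1/n^2)$.

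More precisely, I would organize the surviving terms by the number of distinct vertices $v$ appearing among $i_1,\dots,i_\l,j_1,\dots,j_\l$ and the structure of the resulting connected graph $G$. The leading contribution to $\E(\Als^2,\Op)$ with $v$ vertices has $v=\l+1$ on each side in the ``planted tree / Catalan'' regime, giving the square of the MP moment; the requirement of connectedness forces $v\le 2\l$ and removes exactly the top term, so that the dominant surviving $v$ is at most $2\l-1$, producing the overall extra factor $n^{-1}$ relative to $\E(\Als,\Op)^2\sim O(1)$, refined to $1/(mn)+1/n^2$ by tracking whether the ``gluing'' edge forces a repeated basis (gaining $1/m$) or merely a repeated vertex with a genuine MUB factor (gaining $1/n$). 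Here I would lean on Lemma \ref{4:wglem} to supply the uniform estimate on the number of relevant walk-pairs with prescribed graph type, exactly as it was used to prove Theorem \ref{momentMP}; the case analysis is a finite (in $\l$) enumeration of connected graph shapes arising from two $\l$-cycles.

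The main obstacle I anticipate is the careful matching argument that the disjoint-vertex pairs in $\E(\Als^2,\Op)$ cancel against $\E(\Als,\Op)^2$ up to an acceptable error: because $\Op$ samples vectors \emph{without} forcing distinctness, a term indexed by disjoint vertex sets is not literally a product of two independent expectations unless one is careful about the event that a vector chosen for the $i$-block coincidentally equals one chosen for the $j$-block — but this coincidence costs an additional factor of order $1/m+1/n$ and therefore only contributes to the error term, not the main term. Disentangling this, and simultaneously showing that within the connected-graph contributions no configuration beats the $1/(mn)+1/n^2$ threshold (in particular that the ``one shared vertex, otherwise two disjoint Catalan trees'' configuration, which is the most dangerous, indeed loses the full factor), is the technical heart of the argument; everything else is a repackaging of the graph-counting bookkeeping already developed for Theorem \ref{momentMP} and Lemma \ref{4:wglem}.
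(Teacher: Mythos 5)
Your high-level plan matches the paper's: expand $\Var(\Als,\Op)$ as a sum over pairs of closed paths, quotient by $\Sigma_p$, observe cancellation for "disconnected" pairs, and bound the connected contributions via the same graph-counting machinery as Lemma~\ref{4:wglem}. However, there are two points where the proposal goes astray, one minor and one that is a genuine gap.

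The minor point is your worry that disjoint-vertex pairs only cancel \emph{approximately} against $\E(\Als,\Op)^2$ because sampled vectors could coincide across the two index blocks. This worry is unfounded: in $\Op$ the coordinates $s(1),\dots,s(p)$ are i.i.d., so if $V_{\gamma_1}\cap V_{\gamma_2}=\emptyset$ the restrictions $s|_{V_{\gamma_1}}$ and $s|_{V_{\gamma_2}}$ are \emph{exactly} independent and $\E\bigl(\omega_{\gamma_1}\overline{\omega_{\gamma_2}}\bigr)=\E(\omega_{\gamma_1})\overline{\E(\omega_{\gamma_2})}$ with no error term; the "coincidence" of two indices receiving the same vector is already built into the product measure and does not spoil independence. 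Introducing a spurious $O(1/m+1/n)$ error here happens not to break your bookkeeping, but it reflects a misunderstanding of where the cancellation comes from.

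The genuine gap is the case you yourself flag as "the most dangerous": two walks in the Catalan / double-tree class $\Gamma_\l$ sharing exactly one vertex, so $v_{\gamma_1\cup\gamma_2}=v_{\gamma_1}+v_{\gamma_2}-1$ can be as large as $2\l-1$. For that configuration the per-term size of $W_{\gamma_1\cup\gamma_2}$ is $n^{1-v_{\gamma_1\cup\gamma_2}}$ with \emph{no} extra factor of $1/m$ or $1/n$, so it does not "lose the full factor" on its own; plugging it into the normalized sum would give a contribution of order $1/n$, which is too large. What actually saves the day in the paper is not a size estimate but an \emph{exact identity}: joining $\gamma_1$ and $\gamma_2$ at the shared vertex produces a length-$2\l$ closed path $\gamma'$ which, precisely when both $\gamma_i\in\Gamma_\l$ and they share a single vertex, lies in $\Gamma_{2\l}$; then Lemma~\ref{4:wglem} gives $W_{\gamma_1\cup\gamma_2}=W_{\gamma'}=n^{1-v_{\gamma_1\cup\gamma_2}}=n^{1-v_{\gamma_1}}n^{1-v_{\gamma_2}}=W_{\gamma_1}\overline{W}_{\gamma_2}$, so the term cancels \emph{exactly} in the difference $W_{\gamma_1\cup\gamma_2}-W_{\gamma_1}\overline{W}_{\gamma_2}$. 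Conversely, if the shared set has two or more vertices, the joined path $\gamma'$ cannot be a double tree, and if either $\gamma_i\notin\Gamma_\l$ then both $W_{\gamma_1\cup\gamma_2}$ and $W_{\gamma_1}\overline{W}_{\gamma_2}$ already carry the $1/m+1/n$ loss. Your proposal identifies the dangerous configuration but stops short of the observation (the exact cancellation via the joined path $\gamma'\in\Gamma_{2\l}$) needed to dispose of it; without that step the claimed bound $O_\l(1/(mn)+1/n^2)$ is not reached.
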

The rest of this section is devoted to a proof of Theorem \ref{momentMP}. We leave the proof of Theorem \ref{VarMP} to the next section.

\subsection{Problem Set-up}
A map $\ga: [0\isep \l] \to [1\isep p]$ is called a closed path if $\ga(0)=\ga(\l)$. Denote by $\plp$ the set of all closed paths from $[0\isep \l]$ to $[1\isep p]$.

Now for any $s \in \Op$ and $\ga \in \plp$, we define
\begin{equation}\label{ogs}
\ogs:= \prod_{i=0}^{l-1} \left\langle s\circ\ga(i),s\circ\ga(i+1) \right\rangle .
\end{equation}
We can expand $\Tr\left(\G(s)^\l \right)$ on the right side of (\ref{2:al}) as
$$\Als=\frac{1}{p}\sum_{\ga \in \plp} \ogs.$$
This implies
$$\E(\Als,\Op)=\frac{1}{p}\sum_{\ga \in \plp} \E(\ogs,\Op).$$
To simplify the above equation a little further, we use an argument from \cite{OQBT}.

Let $\Sp$ be the group of permutations on the set $[1\isep p]$. Then $\Sp$ acts on $\plp$, since $\sigma \circ \gamma \in \plp$ whenever $\gamma \in \plp$ and $\sigma \in \Sp$. Let $[\gamma]$ be the equivalent class of $\gamma$, that is,
\[[\gamma]=\left\{\sigma \circ \gamma: \sigma \in \Sp \right\}. \]
We may write
\[\E(\Als,\Op)=\frac{1}{p}\sum_{\gamma \in \plp/\Sp} \,\,\, \sum_{\tau \in [\gamma]}\E\left(\omega_{\tau}(s), \Op\right), \]
where $\plsp$ is the set of representatives of equivalence classes under the equivalence relation
$$\gamma \sim \gamma' \iff \gamma=\sigma \circ \gamma' \quad \exists \sigma \in \Sp \, .$$
For any $\gamma \in \plp /\Sp$, one can easily see that
\[\E\left(\omega_{\tau}(s), \Omega_p\right)=\E\left(\omega_{\gamma}(s), \Omega_p\right), \quad \forall \tau \in [\gamma].  \]
Moreover, let
$$\Vg=\ga\left([0\isep \l] \right) \subset [1\isep p], \quad \vg=\#\Vg, $$
and define the probability space
\begin{eqnarray} \label{2:ov} \ov:=\left\{s: \Vg \to \B \right\}\end{eqnarray}
endowed with the uniform probability. It is clear that $\#[\gamma]=\frac{p!}{(p-v_{\gamma})!}$, $\#\Omega(V_{\gamma})=(mn)^{\vg}$ and
\[\E\left(\omega_{\gamma}(s), \Omega_p\right)=\E\left(\omega_{\gamma}(s), \Omega(V_{\gamma})\right). \]
Summarizing the above considerations, we have
\begin{equation*}\label{Emom}
\E(\Als,\Op)=\frac{1}{p}\sum_{\ga \in \plsp} \frac{p!}{(p-\vg)!} \,\, \Wg,
\end{equation*}
where for simplicity we define $\Wg$ by
\begin{equation}\label{Wg}
\Wg:=\E\left(\ogs,\ov\right).
\end{equation}


\subsection{Proof of Theorem \ref{momentMP}}

The evaluation of $\Wg$ as defined in (\ref{Wg}) is technical and involves a combinatorial argument. To streamline the proof of Theorem \ref{momentMP}, we postpone the study of $\Wg$ to {\bf Appendix} (see Lemma \ref{4:wglem}). Here we assume Lemma \ref{4:wglem} instead and prove Theorem \ref{momentMP}.

Recall from Lemma \ref{4:wglem} in {\bf Appendix} that there is a subset $\Ga_l \subset \plsp$ such that
\begin{eqnarray} \label{2:wgamma} \Wg=\begin{cases}
n^{1-\vg} &\mbox{ if } \ga \in \Ga_\l\,  \\
O_{\l}\left(n^{1-\vg}\left(m^{-1}+n^{-1}\right)\right) & \mbox{ if } \ga \notin \Ga_\l\, .
\end{cases}\end{eqnarray}
Using (\ref{2:wgamma}) and the fact that
\begin{equation}\label{sumpath}
\sum_{\substack{\ga \in \plsp \\ \vg=v}} 1 < v^\l \leq \l^\l, \quad \forall \, v \le \l,
\end{equation}
we obtain
\begin{align} \label{2:est}
\E(\Als,\Op)&=\frac{1}{p}\sum_{\ga \in \Ga_\l} \frac{p!}{(p-\vg)!}n^{1-\vg}+E_\l,
\end{align}
where
\begin{align}
|E_\l | & = \frac{1}{p}\sum_{\ga \in \plp /\Sp} \frac{p!}{(p-\vg)!} O_\l\left(n^{1-\vg}\left(\frac{1}{m}+\frac{1}{n}\right)\right) \nonumber \\
&=O_\l \left( \frac{1}{m}+\frac{1}{n} \right). \label{2:elest}
\end{align}
As for the main term, using the identity (see \cite[Section IV-E]{OQBT} and \cite[Lemma 3.4]{RMT})
$$\sum_{\substack{\ga \in \Ga_\l\\ \vg=v}} 1=\frac{1}{v}\binom{\l}{v-1}\binom{\l-1}{v-1},$$
we can easily obtain
\begin{align}
\frac{1}{p}\sum_{\ga \in \Ga_\l} \frac{p!}{(p-\vg)!}n^{1-\vg} &=\sum_{\ga \in \Ga_\l} \left(\frac{p}{n}\right)^{\vg-1}\left(1+O_\l\left(\frac{1}{p}\right)\right)  \nonumber \\
&=\sum_{v=1}^\l \frac{y^{v-1}}{v}\binom{\l}{v-1}\binom{\l-1}{v-1} +O_\l\left(\frac{1}{n}\right). \label{2:emain}
\end{align}
Inserting (\ref{2:emain}) and (\ref{2:elest}) into Equation (\ref{2:est}) yields the desired result (\ref{2:thm2eq}). The completes the proof of Theorem \ref{momentMP}.

\section{Study of the variance} \label{secVarMP}

Now we proceed to prove Theorem \ref{VarMP}. We first expand the quantity $\Var(\Als,\Op)$ as
\begin{align}
\Var(\Als,\Op)&=\E\left(\left|\Als \right|^2,\Op\right)-\left|\E\left(\Als,\Op\right)\right|^2\nonumber\\
&=\frac{1}{p^2}\sum_{\ga_1,\ga_2 \in \plp} \left(\E(\ogos\overline{\ogts},\Op)-\E(\ogos,\Op)\,\overline{\E(\ogts,\Op)}\right). \label{3:Var}
\end{align}
Here $\overline{z}$ is the complex conjugate of $z$. To simplify it further, denote by $\plsps$ the set of representatives of equivalence classes of the pairs $(\ga_1,\ga_2) \in \plp^2$ under the equivalence relation
$$(\ga_{11},\ga_{12}) \sim (\ga_{21},\ga_{22}) \iff (\ga_{11},\ga_{12})=\left(\sigma \circ \ga_{21},\sigma \circ \ga_{22} \right) \quad \exists \sigma \in \Sp\, ,$$
and for any $\gamma_1, \gamma_2 \in \plp$, define
$$\Vgu:=\Vgo \cup \Vgt, \quad \Vgi:=\Vgo \cap \Vgt, \quad\vgu:=\#\Vgu,\quad\vgi=\#\Vgi\, .$$
Using similar arguments as before, we can write Equation (\ref{3:Var}) as
\begin{align}
\Var(\Als,\Op)&=\frac{1}{p^2}\sum_{(\ga_1,\ga_2) \in \plsps}\frac{p!}{(p-\vgu)!} \left(\Wgu-\Wgo\Wgt \right). \label{Var}
\end{align}
Here
\begin{equation}\label{Wgu}
\Wgu:=\E\left(\ogos\overline{\ogts},\ovu\right),
\end{equation}
and $\Wg$ is defined in (\ref{Wg}).

\begin{lemma} \label{3:lem2}
For any $(\ga_1,\ga_2) \in \plsps$, we have
\begin{equation}\label{diff}
\Wgu-\Wgo\Wgt \ll_{\l} n^{1-\vgu}\left(\frac{1}{m}+\frac{1}{n}\right).
\end{equation}
\end{lemma}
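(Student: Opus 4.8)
The plan is to reduce the bound on $\Wgu - \Wgo\Wgt$ to the single-path estimate \eqref{2:wgamma} already supplied by Lemma \ref{4:wglem}, applied to the union graph of $\ga_1$ and $\ga_2$. First I would observe that the product $\ogos\overline{\ogts}$ is itself a product of inner products along the closed path obtained by concatenating $\ga_1$ and the reverse of $\ga_2$ (the conjugate turns each $\langle s\circ\ga_2(i), s\circ\ga_2(i+1)\rangle$ into $\langle s\circ\ga_2(i+1), s\circ\ga_2(i)\rangle$). If $\Vgi = \emptyset$, then the two families of vectors indexed by $\Vgo$ and $\Vgt$ are sampled independently in $\ovu$, so $\Wgu = \Wgo\Wgt$ exactly and the difference vanishes; hence we may assume $\vgi \ge 1$, i.e.\ $\vgu \le \vgo + \vgt - 1$.

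Next I would apply Lemma \ref{4:wglem} separately to the combined closed walk and to each of $\ga_1$, $\ga_2$. This gives $\Wgu = O_\l(n^{1-\vgu})$ with an extra saving factor $(m^{-1}+n^{-1})$ unless the combined walk lies in $\Ga_{2\l}$ (or the analogous ``good'' set for the concatenated path), and similarly $\Wgo = O_\l(n^{1-\vgo})$, $\Wgt = O_\l(n^{1-\vgt})$ with savings unless $\ga_1, \ga_2 \in \Ga_\l$. The routine case is when at least one of the three is ``bad'': then either $\Wgu$ carries the saving directly, or, using $n^{1-\vgo}\cdot n^{1-\vgt} = n^{1-\vgu}\cdot n^{1-\vgi} \le n^{1-\vgu}\cdot n^{0} = n^{1-\vgu}$ (since $\vgi \ge 1$), one of the factors $\Wgo$ or $\Wgt$ contributes the saving and the other is bounded by $O_\l(n^{1-\vgi}) = O_\l(1)$ or $O_\l(n^{1-\vgt})$ as appropriate; in every such subcase the product is $O_\l(n^{1-\vgu}(m^{-1}+n^{-1}))$, and so is $\Wgu$, so the difference is too.

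The main obstacle is the remaining case in which $\ga_1, \ga_2$ are both good and the concatenated walk is also good: then each of $\Wgu$, $\Wgo$, $\Wgt$ equals its respective main term $n^{1-\vgu}$, $n^{1-\vgo}$, $n^{1-\vgt}$ on the nose, and since $\vgi \ge 1$ the two main terms on the right satisfy $n^{1-\vgo}\cdot n^{1-\vgt} = n^{1-\vgu-\vgi+1} \le n^{-\vgu}$, which is a factor $n^{-1}$ smaller than $n^{1-\vgu}$ — so $\Wgu - \Wgo\Wgt = n^{1-\vgu} + O(n^{-\vgu})$, which is \emph{not} $O_\l(n^{1-\vgu}(m^{-1}+n^{-1}))$ unless $\Wgu$ itself cancels. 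The resolution must be that the combined closed walk cannot in fact be good (i.e.\ cannot lie in the analog of $\Ga_{2\l}$) once $\vgi \ge 1$ and $\ga_1, \ga_2$ are themselves good closed paths sharing a vertex: the structural characterization of $\Ga_\l$ from the Appendix (tree-like pairing structure) should force that gluing two such paths at a common vertex produces a walk with a repeated inner-product pair that lies outside the good set, hence $\Wgu$ inherits the saving and in fact $\Wgu = O_\l(n^{1-\vgu}(m^{-1}+n^{-1}))$ while $\Wgo\Wgt = O(n^{-\vgu})$, and the bound follows. I would make this precise by invoking the same graph-theoretic analysis used to prove Lemma \ref{4:wglem}, now applied to the union graph, and checking that the edge-connectivity/degree condition defining the good set is violated at the shared vertices.
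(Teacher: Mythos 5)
Your framework (reduce to the concatenated walk, dispose of the $\Vgi=\emptyset$ case, handle the cases where at least one of $\ga_1,\ga_2,\ga'$ is outside the good set) is sound and matches the paper. The gap is in the case you flag as the main obstacle, where $\ga_1,\ga_2\in\Ga_\l$ and the concatenated walk $\ga'$ lies in $\Ga_{2\l}$: you propose that this configuration is impossible, i.e.\ that gluing two good closed paths at a shared vertex must force $\ga'\notin\Ga_{2\l}$ so that $\Wgu$ inherits the $(m^{-1}+n^{-1})$ saving. That is false. If $\vgi=1$, identifying a single vertex of two double-trees produces another double-tree, so $\ga'\in\Ga_{2\l}$ genuinely occurs. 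The correct mechanism, and the one the paper uses, is exact cancellation rather than a structural obstruction: one shows that $\ga'\in\Ga_{2\l}$ \emph{forces} $\vgi=1$ (if $\vgi\ge 2$, the two paths inside $\ga_1$ and $\ga_2$ joining two shared vertices either form a cycle in the skeleton of $\ga'$ or make some edge traversed at least four times, contradicting the double-tree property). Then $\vgu=\vgo+\vgt-1$, and
\[\Wgu-\Wgo\Wgt=n^{1-\vgu}-n^{1-\vgo}\,n^{1-\vgt}=n^{1-\vgu}-n^{1-\vgu}=0,\]
so no saving is needed in this case at all.

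An arithmetic slip steered you toward the wrong resolution: you wrote $n^{1-\vgo}\,n^{1-\vgt}=n^{2-\vgu-\vgi}\le n^{-\vgu}$ ``since $\vgi\ge 1$'', but that inequality requires $\vgi\ge 2$; when $\vgi=1$ the product is $n^{1-\vgu}$, exactly the same size as $\Wgu$. Had you noticed this, exact cancellation rather than ``$\ga'$ must be bad'' would have been the natural conjecture, and it is what the structural analysis of $\Ga_{2\l}$ actually delivers.
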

\begin{proof}
If $\vgi=0$ or equivalently $\Vgi=\emptyset$, then clearly $\Wgu=\Wgo\Wgt$, so (\ref{diff}) holds.

Now we consider the case that $\vgi \geq 1$. By choosing different starting points if necessary, we may assume that $\ga_1(0)=\ga_2(0)$. Joining $\ga_1$ and $\ga_2$, we define a new path $\ga':[0\isep 2\l] \to [1\isep p]$ by setting
\begin{equation}\label{ga12}
\ga'(i)=\begin{cases}
\ga_1(i) &\mbox{ if } 0 \leq i \leq \l, \\
\ga_2(2\l-i) &\mbox{ if } \l \leq i \leq 2\l.
\end{cases}
\end{equation}
It is easy to see that $\ga'$ is a closed path of length $2\l$, with the number of vertices given by
$$\#\ga'\left([0 \isep 2 \l]\right)=\vgu=v_{\ga_1}+v_{\ga_2}-\vgi.$$
Moreover, we have $\Wgu=W_{\ga'}$, whose value can be obtained directly from (\ref{2:wgamma}), depending on whether or not $\ga' \in \Gamma_{2\l}$.


Now suppose $\ga' \in \Ga_{2\l}$. by the structure of $\Ga_{2\l}$ (see {\bf Appendix} or \cite[Section IV]{OQBT}), $\ga'$ corresponds to a double-tree, that is, the skeleton of $\ga'$ is a tree (\emph{there is no cycle in the skeleton of the graph $\ga'(0) \to \ga'(1) \to \ldots \to \ga'(2\l-1) \to \ga'(0)$}), and each edge is traversed exactly twice. Since $\ga'$ is obtained by joining $\ga_1$ with $\ga_2$, it is easy to see that $\ga_1,\ga_2 \in \Ga_\l$. Moreover, we must have $\vgi=1$: if $\vgi \ge 2$ instead, then by considering the path between two overlapping vertices in $\Vgi$, we see that it either forms a cycle or the edges involved are traversed at least four times in $\ga'$, contradicting the condition that $\ga' \in \Ga_{2\l}$ (so each edge is traversed exactly twice). Thus $\vgu=v_{\ga_1}+v_{\ga_2}-1$, and by using (\ref{2:wgamma}) we have
\[\Wgu-\Wgo\Wgt=n^{1-\vgu}-n^{1-v_{\ga_1}}n^{1-v_{\ga_2}}=0. \]

Next we assume that $\ga' \notin \Ga_{2\l}$ and $\vgi \geq 1$. Then either $\ga_1 \notin \Ga_\l$ or $\ga_2 \notin \Ga_\l$. By using (\ref{2:wgamma}) again we have
$$\Wgu=W_{\ga'} \ll_{\l}n^{1-\vgu}\left(\frac{1}{m}+\frac{1}{n}\right), $$
and
$$\Wgo\Wgt \ll_{\l}n^{1-\vgo}n^{1-\vgt}\left(\frac{1}{m}+\frac{1}{n}\right)
\le n^{1-\vgu}\left(\frac{1}{m}+\frac{1}{n}\right), $$
so we still have (\ref{diff}). 


Summarizing all the above cases, we conclude that Equation (\ref{diff}) holds true for any $(\ga_1,\ga_2) \in \plsps$. This completes the proof of Lemma \ref{3:lem2}.
\end{proof}

Finally, inserting (\ref{diff}) into (\ref{Var}) and using 
\begin{equation*}\label{bound2}
\sum_{\substack{(\ga_1,\ga_2) \in \plsps \\ \vgu=v}} 1 < v^{2\l} \leq (2\l)^{2\l},
\end{equation*}
we can obtain
\begin{align*}
\Var(\Als,\Op)&\ll_{\l}\frac{1}{p^2}\sum_{(\ga_1,\ga_2) \in \plsps}p^{\vgu}n^{1-\vgu}\left(\frac{1}{m}+\frac{1}{n}\right)\\
&\ll_{\l} \sum_{v=1}^{2\l}y^{v-2}\left(\frac{1}{mn}+\frac{1}{n^2}\right)\sum_{\substack{(\ga_1,\ga_2) \in \plsps\\ \vgu=v}} 1\\
&\ll_{\l}\frac{1}{mn}+\frac{1}{n^2}.
\end{align*}
This completes the proof of Theorem \ref{VarMP}.

\section{Appendix}

In this Section we prove (\ref{2:wgamma}) (see also Lemma \ref{4:wglem}), which plays important roles in the proofs of Theorem \ref{momentMP} and \ref{VarMP} in the previous sections.

Let $\gamma: [0 \isep l_{\gamma}] \to [1 \isep p]$ be a closed path with 
$$V_{\gamma}=\gamma([0 \isep l_{\gamma}])=\{z_a: 1 \le a \le v_{\gamma}\}, \quad v_{\gamma}=\#V_{\gamma}.$$ 
Denote $I_a=\gamma^{-1}(z_a)$ for any $1 \le a \le v_{\gamma}$. Recall that $\Wg=\E\left(\ogs,\ov\right)$ where $\ogs$ is defined in (\ref{ogs}), and $\ov$ is defined in (\ref{2:ov}).

\begin{definition} \label{4:def1}
The closed path $\gamma$ is called reduced if $l_{\gamma}=v_{\gamma}=1$, or if $v_{\gamma} \ge 2$ and the following two conditions are satisfied:
\begin{itemize}
\item[(i).] each $\#I_a  \ge 2$, hence $l =\sum_a \# I_a \ge 2 v \ge 4$;

\item[(ii).] each $I_a$ does not contain consecutive indices, that is, $\gamma(u) \ne \gamma(u+1) \,\, \forall u$.

\end{itemize}
\end{definition}

\subsection{$\ga$ is reduced}

Suppose $\ga$ is reduced. If $\lga=\vg=1$, then obviously $\ogs=1$, so we have $\Wg=1$.

Now we consider the case that $\ga$ is reduced and $\vg \geq 2$. For each given $s \in \ov$, define
\[N_{s,\ga}:=\#\left\{s(z): z \in \Vg\right\}, \quad C_{s,\ga}:=\#\left\{i: s \circ \ga(i) \ne s \circ \ga(i+1)\right\}. \]
Since for any $v_1,v_2 \in \B$,
$$\langle v_1, v_2\rangle=\begin{cases}
1 &\mbox{ if } v_1=v_2, \\
O(n^{-\frac{1}{2}}) &\mbox{ if } v_1 \neq v_2,
\end{cases}$$
if $N_{s,\ga}=1$, then $\ogs=1$, and for any $i \ge 2$ we have
\begin{eqnarray}
\left|\sum_{\substack{s \in \ov\\
N_{s,\ga}=i}} \ogs\right| & \le &  \sum_{\substack{s \in \ov\\
N_{s,\ga}=i}} \left|\prod_{i} \langle s\circ \ga(i), s \circ \ga(i+1) \rangle \right| \nonumber \\
& \le & \sum_{\substack{s \in \ov\\
N_{s,\ga}=i}} \left(\frac{1}{\sqrt{n}}\right)^{C_{s,\ga}} \le  \left(\frac{1}{\sqrt{n}}\right)^{C_{s,\ga}} (mn)^i \, \l_{\ga}^i. \label{4:wgs}
\end{eqnarray}
Next we provide an estimate of $C_{s,\ga}$. 
\begin{lemma} \label{4:lemn}
If $N_{s,\ga} \ge 2$, then
\begin{eqnarray} \label{4:csg1}
C_{s,\ga} \ge \max \left\{N_{s,\ga}, \,\,3N_{s,\ga} -v_{\ga}\right\}.
\end{eqnarray}
\end{lemma}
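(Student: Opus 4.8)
The plan is to think of the walk $s\circ\ga$ on the alphabet $\B$ as a closed walk in an auxiliary graph whose vertices are the $N_{s,\ga}$ distinct vectors actually hit, and to count edge-traversals. Concretely, let $H$ be the multigraph on vertex set $\{s(z):z\in\Vg\}$ whose edges are the pairs $\{s\circ\ga(i),s\circ\ga(i+1)\}$ for those $i$ with $s\circ\ga(i)\ne s\circ\ga(i+1)$; then by definition $C_{s,\ga}$ is the number of such traversals (counted with multiplicity), and the underlying simple graph of $H$ is connected because $\ga$ is a closed path and collapsing the fibres $I_a$ keeps it connected. Since a connected graph on $N_{s,\ga}$ vertices has at least $N_{s,\ga}-1$ edges and a closed walk must traverse each edge of a connected graph an even number of times if it is to return — more precisely, in a closed walk each edge of the underlying graph is used at least twice in total when $N_{s,\ga}\ge 2$ — we get $C_{s,\ga}\ge 2(N_{s,\ga}-1)$. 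To upgrade the constant $2$ to $3$ in the bound $C_{s,\ga}\ge 3N_{s,\ga}-v_\ga$, I would exploit condition (ii) in Definition \ref{4:def1} together with the fiber structure: the genuinely different consecutive vectors arise from edges of $\ga$ that connect distinct fibers $I_a,I_b$, and one counts how the $v_\ga$ fibers can be glued under the map $z\mapsto s(z)$.

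The key steps, in order, would be: (1) establish the easy bound $C_{s,\ga}\ge N_{s,\ga}$, which already follows from connectedness of $H$ plus the fact that every vertex of a closed walk must be both entered and left, so no vertex other than possibly the start is isolated in the traversal count — actually each of the $N_{s,\ga}$ vertices is incident to at least one traversal, and a short parity/degree argument gives at least $N_{s,\ga}$ traversals total. (2) For the bound $C_{s,\ga}\ge 3N_{s,\ga}-v_\ga$: partition $[0\isep\l]$ according to the fiber containing each index, and observe that within a single fiber $I_a$ the values $s\circ\ga(i)$ are all equal, so "color changes" of the walk happen only at the $\l$ edges of $\ga$, and condition (ii) guarantees that every edge of $\ga$ joins two distinct fibers. (3) Build a quotient graph $\overline H$ on the $v_\ga$ fibers, with an edge for each edge of $\ga$; this is connected with $v_\ga$ vertices and $\l$ edges (with multiplicity), and the map "fiber $\mapsto$ its $s$-value" is a graph surjection $\overline H\to H$. (4) Combine: the number of edges of $\overline H$ whose two endpoints are merged by this map is at least $v_\ga-N_{s,\ga}$ (that many merges are needed to collapse $v_\ga$ connected pieces into $N_{s,\ga}$), so the number of edges of $\overline H$ that survive as genuine traversals is at most $\l-(v_\ga-N_{s,\ga})$; but more usefully, since each surviving edge of $H$ must be traversed at least twice and $H$ is connected on $N_{s,\ga}$ vertices, we obtain $C_{s,\ga}\ge 2(N_{s,\ga}-1)+(\text{excess})$ and a careful bookkeeping of the fibers of size $\ge 2$ (each contributing at least one extra traversal beyond a tree) yields the claimed $3N_{s,\ga}-v_\ga$. (5) Take the maximum of the two bounds.

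I expect the main obstacle to be step (4): pinning down exactly why the second bound has the shape $3N_{s,\ga}-v_\ga$ rather than merely $2N_{s,\ga}-O(1)$. The point to get right is that the "$3$" comes from charging each of the $N_{s,\ga}$ distinct vectors for one traversal to \emph{reach} it, plus at least $2(N_{s,\ga}-1)$ traversals to form a connected even closed walk, and then \emph{subtracting} the slack $v_\ga-N_{s,\ga}$ coming from fibers that got identified — so the inequality is really $C_{s,\ga}\ge N_{s,\ga}+2(N_{s,\ga}-1)-(v_\ga-N_{s,\ga})+\text{(nonnegative correction)}$, and one must check the correction term is genuinely $\ge 0$ using condition (i) that every fiber has size $\ge 2$. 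Handling the boundary/degenerate configurations (e.g.\ when many fibers map to the same vector, or when $\ga$ itself is close to a double-tree) is where the argument needs the most care; I would isolate these as separate cases or absorb them into a single inequality via an Euler-characteristic style count on the graph $H$ weighted by traversal multiplicities.
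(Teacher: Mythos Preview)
Your setup in step~(1) does yield the first bound $C_{s,\ga}\ge N_{s,\ga}$: after collapsing consecutive equal values in the sequence $s\circ\ga(0),\ldots,s\circ\ga(\l)$ you obtain a genuine closed walk on $H$ with $C_{s,\ga}$ steps visiting all $N_{s,\ga}$ vertices, and since every vertex of a closed walk has even degree $\ge 2$, the handshake lemma gives $2C_{s,\ga}\ge 2N_{s,\ga}$. However, your parenthetical claim that ``in a closed walk each edge of the underlying graph is used at least twice'' is false (a single traversal of a cycle is a counterexample), so the intermediate bound $C_{s,\ga}\ge 2(N_{s,\ga}-1)$ is not available in general.

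This propagates into a genuine gap in step~(4). The inequality you write down at the end,
\[
C_{s,\ga}\ \ge\ N_{s,\ga}+2(N_{s,\ga}-1)-(v_\ga-N_{s,\ga})\ =\ 4N_{s,\ga}-v_\ga-2,
\]
is strictly stronger than the claimed $3N_{s,\ga}-v_\ga$ and is in fact false: take $\ell=6$, $v_\ga=3$, $\ga$ the path $1\!\to\!2\!\to\!3\!\to\!1\!\to\!2\!\to\!3\!\to\!1$ (which is reduced), and $s$ injective on $\{1,2,3\}$. Then $N_{s,\ga}=3$, $C_{s,\ga}=6$, but $4N_{s,\ga}-v_\ga-2=7>6$. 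So the bookkeeping you sketch cannot close as written; the ``nonnegative correction'' is in fact negative.

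The paper's argument avoids this by working on the \emph{other} side of your quotient map. Instead of the graph $H$ on the $N_{s,\ga}$ vectors, it builds a graph $G$ on the $v_\ga$ fibres $V_\ga$, placing an edge $\overline{zz'}$ precisely when $\{z,z'\}=\{\ga(i),\ga(i+1)\}$ for some $i$ \emph{and} $s(z)=s(z')$. Then the number of connected components of $G$ is at least $N_{s,\ga}$, and each component contributes at least one step with $s\circ\ga(i)\ne s\circ\ga(i+1)$ when you leave it along $\ga$. The key extra observation is that a \emph{singleton} component $\{z\}$ contributes at least two such steps, because condition~(i) of Definition~\ref{4:def1} forces $z$ to be visited at least twice by $\ga$, and each visit must be flanked by a change of $s$-value. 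Letting $h$ denote the number of singleton components gives $C_{s,\ga}\ge N_{s,\ga}+h$, while the size constraint $v_\ga\ge h+2(N_{s,\ga}-h)$ yields $h\ge 2N_{s,\ga}-v_\ga$; combining these gives exactly $C_{s,\ga}\ge 3N_{s,\ga}-v_\ga$. The ``$3$'' thus comes from $1+2$ (one change per component, two per singleton), not from the $1+2(N-1)$ mechanism you were reaching for.
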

\begin{proof}
For fixed $\ga$ and $s$, we define an undirected graph $G=(V,E)$ as follows: the vertex set is $V=V_{\ga}$; as for the edge set $E$, for any $z,z' \in V$, the edge $\overline{zz'} \in E$ if and only if $s(z)=s(z')$ and there exists some index $i$ such that $\left\{\ga(i), \ga(i+1)\right\}=\{z,z'\}$.

If $G'=(V',E')$ is a connected component of $G$, then $s(z)=s(z')$ for any $z,z' \in V'$. Hence $N_{s,\ga}$ is the number of connected components of $G$. Now decompose $G$ into connected components
\[G=\bigcup_{t=1}^{N_{s,\ga}} G_t \]
where $G_t=(V_t,E_t)$.

For each $G_t$, noting that $V=\{\ga(i): 0 \le i \le \l_{\ga}-1\}$ and $\ga(\l_{\ga})=\ga(0)$,  there exists an index $t' \ne t$ with elements $z \in V_t$, $z' \in V_{t'}$ such that $\overline{zz'} \notin E$. This corresponds to an $i$ such that $s \circ \ga(i) \ne s \circ \ga(i+1)$. This show immediately that $C_{s,\ga} \ge N_{s,\ga}$. Moreover, if $\#V_t=1$ for some $t$, let $V_t=\{z\}$. Since $\ga$ is reduced, by (i) of Definition \ref{4:def1}, there exists at least two indices $i \ne j$ such that $\ga(i)=\ga(j)=z$. Since $\#V_t=1$, we also have $s \circ \ga(i) \ne s \circ \ga(i+1)$ and $s \circ \ga(j) \ne s \circ \ga(j+1)$. Now denote by $h$ the number of $t$'s such that $\#V_t=1$. From the above argument we have
\begin{eqnarray} \label{4:csg}
C_{s,\ga} &\ge & 2h+(N_{s,\ga}-h)=N_{s,\ga}+h.
\end{eqnarray}
As for $h$, since
\[ V=\bigcup_{t=1}^{N_{s,\ga}}V_t,\]
we have
\begin{eqnarray} \label{4:eqh}
v_{\ga} \ge h+2*(N_{s,\ga}-h) \quad \Longrightarrow \quad h \ge 2 N_{s,\ga}-v_{\ga}.\end{eqnarray}
Combining (\ref{4:csg}) and (\ref{4:eqh}) and noting that $h \ge 0$, we obtain the desired result (\ref{4:csg1}).
\end{proof}

Since $\Wg=\E\left(\ogs,\ov\right)$, we can write 
\begin{align}
\Wg&=\frac{1}{(mn)^{\vg}}\sum_{i=1}^{\vg}  \sum_{\substack{s \in \ov \\ N_{s,\ga}=i}} \ogs\nonumber\\
&=\frac{1}{(mn)^{\vg}}\left(\sum_{\substack{s \in \ov \\ N_{s,\ga}=1}} \ogs+
\sum_{2 \le i \le \frac{\vg}{2}} \sum_{\substack{s \in \ov \\ N_{s,\ga}=i}} \ogs +\sum_{\frac{\vg}{2}< i \le \vg}  \sum_{\substack{s \in \ov \\ N_{s,\ga}=i}} \ogs \right). \nonumber
\end{align}
Using (\ref{4:wgs}) and Lemma \ref{4:lemn}, we can obtain
\begin{align}
\Wg&\ll_{\l_{\ga}} \frac{1}{(mn)^{\vg}}\left(mn+\sum_{2 \le i \le \frac{\vg}{2}}(mn)^i n^{-\frac{i}{2}}+\sum_{ \frac{\vg}{2} <i \le v_{\ga}}(mn)^in^{-\frac{3i-\vg}{2}}\right). \nonumber
\end{align}
By the assumption that $m \geq \sqrt{n}$, we can easily conclude that
\begin{align}
\Wg &\ll_{\l_{\ga}} \frac{1}{(mn)^{\vg}}\left(mn+(m\sqrt{n})^{\frac{\vg}{2}}+n^\frac{\vg}{2}\left(\frac{m}{\sqrt{n}}\right)^{\vg}\right)\nonumber\\
&\ll_{\l_{\ga}} n^{1-\vg}\left(\frac{1}{m}+\frac{1}{n}\right). \label{Wg4}
\end{align}

\subsection{Reduction for $\gamma$}

If $\ga$ is not reduced, then either $\gamma(u)=\ga(u+1)$ for some $u$, or $\#I_a =1$ for some $a$.

\textbf{Case 1:} Suppose $\ga(u)=\ga(u+1)$ for some $u$. Since for any $i$, $s \circ \gamma(i) \in \B$ is a unit vector, $\left\langle s \circ \gamma(u), s \circ \gamma(u+1) \right\rangle=1$. Defining the closed path $\ga':[0\isep \lga-1] \to [1\isep p]$ by
\begin{eqnarray} \label{4:nga}
\ga'(i)=\begin{cases}
\ga(i) & 0 \leq i \leq u-1, \\
\ga(i+1) & u \leq i \leq \lga-2,
\end{cases}
\end{eqnarray}
we see that
\[\prod_{i=0}^{\lga-1} \left\langle s\circ\ga(i),s\circ\ga(i+1) \right\rangle = \prod_{i=0}^{\lga-2} \left\langle s\circ\ga'(i),s\circ\ga'(i+1) \right\rangle. \]
That is, $\ogs=\om_{\ga'}(s)$. Hence by reducing $\gamma$ to $\gamma'$, we have
\begin{eqnarray} \label{4:red1}
\Wg=W_{\ga'} \quad \mbox{ and } \quad \l_{\ga'}=\lga-1, \quad v_{\ga'}=\vg.
\end{eqnarray}

\textbf{Case 2:} Suppose $\# I_a=1$ for some $a$. Let $I_a=\{u\}$, this $u$ is the only index such that $\ga(u)=z_a$. We can write $\Wg$ as
\begin{align}
\Wg&=\frac{1}{(mn)^{\vg}} \sum_{s: V_{\ga} \setminus \{z_a\} \to \B } \prod_{i \notin \{u-1, u\}}\left \langle s \circ \ga(i), s \circ \ga(i+1) \right\rangle \nonumber \\
& \ \, \,\,\, \times \sum_{k=1}^m \sum_{s(z_a) \in B_k} \left\langle s \circ \ga(u-1), s(z_a) \right \rangle \cdot \left\langle s (z_a), s \circ \ga(u+1) \right \rangle . \nonumber
\end{align}
Since $B_k$ is an orthonormal basis of $\C^n$, it is easy to see that
\[\sum_{s(z_a) \in B_k} \left\langle s \circ \ga(u-1), s(z_a) \right \rangle \cdot \left\langle s (z_a), s \circ \ga(u+1) \right \rangle=\left\langle s \circ \ga(u-1), s \circ \ga(u+1) \right\rangle. \]
Thus we have
\begin{align}
\Wg&=\frac{m}{(mn)^{\vg}} \sum_{s: V_{\ga} \setminus \{z_a\} \to \B } \prod_{i \notin \{u-1, u\}}\left \langle s \circ \ga(i), s \circ \ga(i+1) \right\rangle \nonumber \\
& \ \, \,\,\, \times  \left\langle s \circ \ga(u-1), s \circ \ga(u+1) \right\rangle . \nonumber
\end{align}
Defining the closed path $\ga':[0\isep \lga-1] \to [1\isep p]$ as in (\ref{4:nga}) again, we still have $\ogs=\om_{\ga'}(s)$ and 
\begin{eqnarray} \label{4:red2}
\Wg=\frac{1}{n} \, W_{\ga'} \quad \mbox{ and } \quad \l_{\ga'}=\lga-1, \quad v_{\ga'}=\vg-1.\end{eqnarray}

\subsection{Estimate of $\Wg$}

The path $\ga$ in general may not be reduced. We can conduct reductions on $\ga$ repeatedly via either \textbf{Case 1} or via \textbf{Case 2}, as long as the resulting closed path is not reduced. Suppose altogether we have conducted \textbf{Case 1} reduction $u$ times and \textbf{Case 2} reduction $w$ times and finally we arrive at  a closed path $\widetilde{\ga}$ which is reduced. Then from (\ref{4:red1}) and (\ref{4:red2}) we have
\begin{equation}\label{nonre}
\l_{\widetilde{\ga}}=\lga-u-w, \quad v_{\widetilde{\ga}}=\vg-w\quad\mbox{and}\quad\Wg=\frac{1}{n^{w}} \, W_{\widetilde{\ga}}.
\end{equation}

If $\l_{\widetilde{\ga}}=v_{\widetilde{\ga}}=1$, then $W_{\widetilde{\ga}}=1$. Noting that $v_{\widetilde{\ga}}=1=v_{\ga}-w$, we have $\Wg=n^{1-v_{\ga}}$.

Denote by $\Ga_\l$ the set of all closed paths $\ga \in \plsp$ which can be reduced via \textbf{Case 1} or \textbf{Case 2} reductions to $\widetilde{\ga}$ with $\l_{\widetilde{\ga}}=v_{\widetilde{\ga}}=1$, which is a single point with a loop. Note that the same set $\Ga_{\l}$ has appeared in \cite[Section IV]{OQBT} and in the standard proof of the Marchenko-Pastur law for random matrices (see \cite{Ander, MET}), representing ``double trees''. Interested readers may refer to \cite{Ander, MET} for more detailed descriptions of the set $\Ga_\l$.

If $\ga \notin \Ga_\l$, then the resulting $\widetilde{\ga}$ is reduced with $v_{\widetilde{\ga}} \ge 2$. From (\ref{Wg4}) we have $W_{\widetilde{\ga}} \ll_{\l_{\widetilde{\ga}}} n^{1-v_{\widetilde{\ga}}} \left(\frac{1}{m}+\frac{1}{n}\right)$. Using (\ref{nonre}) we obtain
\[\Wg= \frac{1}{n^w} \, W_{\widetilde{\ga}} \ll_{\l_{\ga}} n^{1-v_{\ga}} \left(\frac{1}{m}+\frac{1}{n}\right). \]
Thus we have proved the desired result (\ref{2:wgamma}): 
\begin{lemma} \label{4:wglem}
If $\ga: [0 \isep \l] \to [1 \isep p]$ is a closed path, then 
\begin{eqnarray*}
\Wg=\begin{cases}
n^{1-\vg} &\mbox{ if } \ga \in \Ga_\l, \,  \\
O_{\l}\left(n^{1-\vg}\left(m^{-1}+n^{-1}\right)\right) & \mbox{ if } \ga \notin \Ga_\l\, .
\end{cases}\end{eqnarray*}
\end{lemma}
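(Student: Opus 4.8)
The plan is to establish Lemma~\ref{4:wglem} by a two-stage argument: first reduce an arbitrary closed path $\ga$ to a \emph{reduced} path $\widetilde\ga$ via repeated elementary surgeries, tracking exactly how $\Wg$ changes; then estimate $W_{\widetilde\ga}$ directly for reduced paths by a graph-counting argument. The key observation driving the reduction is that the inner products $\langle s\circ\ga(i),s\circ\ga(i+1)\rangle$ behave dichotomously on $\B$: they equal $1$ when the two vectors coincide and have modulus $n^{-1/2}$ otherwise (this is precisely where the MUB hypothesis \eqref{1:eq1} enters). So any factor with a literal repeat $\ga(u)=\ga(u+1)$ contributes $1$ and can be deleted, shortening the path by one step while keeping the vertex set and hence $\Wg$ unchanged (\textbf{Case 1}). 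Slightly more subtly, if a vertex $z_a$ is visited only once, its single neighbour-sum $\sum_{s(z_a)\in B_k}\langle \cdot, s(z_a)\rangle\langle s(z_a),\cdot\rangle$ telescopes using the completeness relation of the orthonormal basis $B_k$, summed over the $m$ choices of basis; this removes $z_a$, drops $\vg$ by one, and introduces exactly one factor of $1/n$ (\textbf{Case 2}). Iterating, after $u$ Case-1 and $w$ Case-2 moves one lands on a reduced $\widetilde\ga$ with $\Wg=n^{-w}W_{\widetilde\ga}$, $v_{\widetilde\ga}=\vg-w$.

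Next I would handle the reduced case. If the reduced path is a single looped point ($\l_{\widetilde\ga}=v_{\widetilde\ga}=1$) then trivially $W_{\widetilde\ga}=1$, and back-substituting gives $\Wg=n^{1-\vg}$ exactly; the set of all original paths collapsing to this is by definition $\Ga_\l$, the ``double trees'' familiar from the Marchenko--Pastur moment computation. Otherwise $\widetilde\ga$ is reduced with $v_{\widetilde\ga}\ge 2$, and I need the bound $W_{\widetilde\ga}\ll_{\l} n^{1-v_{\widetilde\ga}}(m^{-1}+n^{-1})$. For this I would partition the maps $s\in\Om(V_{\widetilde\ga})$ according to $N_{s,\ga}=\#\{s(z):z\in V_{\widetilde\ga}\}$, the number of distinct vectors used. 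Using $|\langle v_1,v_2\rangle|\le n^{-1/2}$ when $v_1\ne v_2$, each such $s$ contributes at most $n^{-C_{s,\ga}/2}$ in modulus, where $C_{s,\ga}$ counts the steps across distinct vectors; the number of $s$ with a given $N_{s,\ga}=i$ is $\le (mn)^i\l^{i}$ up to constants. The heart of the matter is the lower bound on $C_{s,\ga}$: I would build the auxiliary graph $G$ on vertex set $V_{\widetilde\ga}$ joining $z,z'$ when some consecutive pair of the path maps to $\{z,z'\}$ under $s$, observe its connected components are exactly the $N_{s,\ga}$ fibres of $s$, count the ``boundary'' steps between components to get $C_{s,\ga}\ge N_{s,\ga}$, and then exploit condition (i) of reducedness (every vertex visited $\ge 2$ times) to argue that singleton components force \emph{two} boundary steps each, yielding $C_{s,\ga}\ge 3N_{s,\ga}-v_{\widetilde\ga}$. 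Splitting the sum over $i$ at $i=\vg/2$ and plugging in the two regimes of the bound, together with the standing assumption $m\ge\sqrt n$, the geometric sums telescope to $n^{1-v_{\widetilde\ga}}(m^{-1}+n^{-1})$.

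The main obstacle I anticipate is the combinatorial lower bound on $C_{s,\ga}$ in Lemma~\ref{4:lemn} --- specifically getting the sharp coefficient $3$ in $3N_{s,\ga}-v_\ga$ rather than a weaker constant. The naive count only gives $C_{s,\ga}\ge N_{s,\ga}$, which is too lossy in the ``many distinct vectors'' regime $i>\vg/2$ and would not beat the $(mn)^i$ growth. The improvement hinges on a careful accounting of how many components $V_t$ of $G$ are singletons: each singleton, being a vertex the path enters and leaves at least twice with the endpoints landing in \emph{other} components (since $s$ takes a value there shared by no other vertex), costs at least two crossing steps, not one; meanwhile a counting of $v_\ga=\sum_t\#V_t$ against the number $h$ of singletons gives $h\ge 2N_{s,\ga}-v_\ga$, and combining $C_{s,\ga}\ge N_{s,\ga}+h$ with $h\ge 0$ yields the stated maximum. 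One has to be slightly careful that the relevant indices realizing the crossings are genuinely distinct and that the cyclic structure $\ga(\l_\ga)=\ga(0)$ is used correctly; once that bookkeeping is in place, the rest is routine summation. Everything downstream --- feeding \eqref{2:wgamma} into the moment and variance estimates --- has already been carried out in the earlier sections, so this appendix is self-contained modulo these graph-theoretic estimates.
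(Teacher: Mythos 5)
Your proposal follows the paper's appendix proof essentially step for step: the same two reduction surgeries (deleting a literal repeat, and telescoping a singly-visited vertex via the completeness relation of each $B_k$), the same auxiliary graph $G$ whose components are the fibres of $s$, the same two-part lower bound $C_{s,\ga}\ge\max\{N_{s,\ga},\,3N_{s,\ga}-v_\ga\}$ obtained by counting singleton components, and the same split of the sum at $i=v_\ga/2$ combined with $m\ge\sqrt n$. It is correct and takes essentially the same approach as the paper.
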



\section*{Acknowledgements} The research of M. Xiong was supported by RGC grant number 16303615 from Hong Kong.




\begin{thebibliography}{99}

\bibitem{Ander} G. W. Anderson, A. Guionnet, and O. Zeitouni, \emph{An introduction
to random matrices}, Cambridge studies in Advanced Mathematics, 118. Cambridge University Press, Cambridge, 2010.

\bibitem{Tarokh1} B. Babadi, S. S. Ghassemzadeh and V. Tarokh, ``Group randomness properties of pseudo-noise and Gold sequences,'' \emph{2011 12th Canadian Workshop on Information Theory (CWIT)}, 2011, pp. 42--46. 

\bibitem{Tarokh2} B. Babadi and V. Tarokh, ``Spectral distribution of random matrices from binary linear block codes,'' \emph{IEEE Trans. Inform. Theory} \textbf{57} (2011), no. 6, 3955--3962.

\bibitem{Tarokh3} B. Babadi and V. Tarokh, ``Spectral distribution of product of pseudorandom matrices formed from binary block codes,'' \emph{IEEE Trans. Inform. Theory} \textbf{59} (2013), no. 2, 970--978.

\bibitem{RMT} Z. Bai and J. W. Silverstein, \emph{Spectral analysis of large dimensional random matrices}, 2nd ed. Springer Series in Statistics, 2010.

\bibitem{BAN} S. Bandyopadhyay, P. O. Boykin, V. Roychowdhury and F. Vatan, ``A new proof for the existence of mutually unbiased bases,'' \emph{Algorithmica} \textbf{34} (2002), no. 4, 512--528.

\bibitem{BEC} H. Bechmann-Pasquinucci and W. Tittel, ``Quantum cryptography using larger alphabets,'' \emph{Phys. Rev. A} (3) \textbf{61} (2000), no. 6, 062308, 6 pp.

\bibitem{BEN} C. H. Bennett and G. Brassard, ``Quantum cryptography: public key distribution and coin tossing,'' \emph{Theoret. Comput. Sci.} \textbf{560} (2014), part 1, 7--11.

\bibitem{BRU} D. Bruss, ``Optimal eavesdropping in quantum cryptography with six states,'' \emph{Phys. Rev. Lett.} \textbf{81} (14) (1998), 3018--3021.

\bibitem{Cao} X. Cao and W. Chou, ``More constructions of approximately mutually unbiased bases,'' \emph{Bull. Aust. Math. Soc.} {\bf 93} (2016), no. 2, 211--222.

\bibitem{CSC} C. Chan, E. Kung and M. Xiong, ``Random matrices from linear codes and Wigner's semicircle law,'' \emph{IEEE Trans. Inform. Theory} \textbf{65} (2019), no. 10, 6001--6009.


\bibitem{DEL} P. Delsarte, J. M. Goethals and J. J. Seidel, ``Bounds for systems of lines, and Jacobi polynomials,'' \emph{Geometry and Combinatorics} (1991), 193--207.


\bibitem{Gsl} C. Godsil and A. Roy A, ``Equiangular lines, mutually unbiased bases, and spin models,'' \emph{European J. Combin.} {\bf 30} (2009), no. 1, 246--262.

\bibitem{Hall} J. Hall, ``Mutually unbiased bases and related structures,'' PhD thesis, RMIT University, Australia, 2011.


\bibitem{HOG} S. G. Hoggar, ``$t$-designs in projective spaces,'' \emph{European J. Combin.} \textbf{3} (1982), no. 3, 233--254.

\bibitem{IVO} I. D. Ivonovi\'c, ``Geometrical description of quantal state determination,'' \emph{J. Phys. A} \textbf{14} (1981), no. 12, 3241--3245.

\bibitem{KAB} G. A. Kabatiansky and V. I. Levenshtein, ``On bounds for packings on a Sphere and in space,'' \emph{Probl. Peredachi Inf.} \textbf{14} (1978), no. 1, 3--25; and \emph{Problems of Information Transmission} \textbf{14} (1978), 1--17. 

\bibitem{KLA} A. Klappenecker and M. R\"{o}tteler, ``Constructions of mutually unbiased bases,'' \emph{Finite Fields and Applications}, 137--144, Lecture Notes in Comput. Sci., 2948, Springer, Berlin, 2004. 

\bibitem{KLA1} A. Klappenecker and M. R\"{o}tteler, ``Mutually unbiased bases are complex projective 2-designs,'' \emph{Proceedings International Symposium on Information Theory}, ISIT2005, Adelaide, SA, 2005, pp. 1740--1744. 

\bibitem{LiJ} J. Li, K. Feng, ``Constructions on Approximately Mutually Unbiased Bases by Galois Rings,'' \emph{J. Syst. Sci. Complex.} {\bf 28} (2015), no. 6, 1440--1448.

\bibitem{MET} M. L. Mehta, \emph{Random Matrices}, 
3rd ed. Pure and Applied Mathematics (Amsterdam), vol. 142, 2004.

\bibitem{econ} S. Pafka, M. Potters, and I. Kondor, ``Exponential weighting
and random-matrix-theory-based filtering of financial covariance matrices
for portfolio optimization.'' (2004) Available at: http://arxiv.org/abs/cond-mat/0402573

\bibitem{SCH} J. Schwinger, ``Unitary Operator Bases,'' \emph{Proc. Nat. Acad. Sci. U.S.A.} \textbf{46} (1960), 570--579.

\bibitem{Shpar} I. E. Shparlinski and A. Winterhof, ``Constructions of approximately mutually unbiased bases,'' LATIN 2006: Theoretical informatics, 793--799, Lecture Notes in Comput. Sci., 3887, Springer, Berlin, 2006.

\bibitem{Sol1} I. Soloveychik, Y. Xiang, and V. Tarokh, ``Symmetric pseudo-random matrices,'' \emph{IEEE Trans. Inform. Theory} \textbf{64} (2018), no. 4, part 2, 3179--3196.

\bibitem{Sol2} I. Soloveychik, Y. Xiang, and V. Tarokh, ``Pseudo-Wigner matrices,'' \emph{IEEE Trans. Inform. Theory} \textbf{64} (2018), no. 4, part 2, 3170--3178.



\bibitem{TUL} A. M. Tulino and S. Verd\'u, ``Random matrix theory and wireless
communications,'' \emph{Commun. Inf. Theory} \textbf{1} (2004), no. 1, 1--182.

\bibitem{Wang} G. Wang, M. Niu and F. Fu, ``Two new constructions of approximately mutually unbiased bases,'' \emph{Int. J. Quantum Inf.} {\bf 16} (2018), no. 4, 1850038, 10 pp.

\bibitem{Wig} E. P. Wigner, ``Characteristic vectors of bordered matrices with infinite dimensions,'' \emph{Ann. of Math.} \textbf{62} (1955), no. 2, 548--564.

\bibitem{Wis} J. Wishart, ``The generalised product moment distribution in samples from a normal multivariate population,'' \emph{Biometrika} \textbf{20A} (1928), no. 1/2, 32--52.

\bibitem{Woc} P. Wocjan P, T. Beth, ``New Construction of mutually unbiased bases in square dimensions,'' \emph{Quantum Inf. Comput.} {\bf 5} (2005), no. 2, 93--101.

\bibitem{WOO} W. K. Wootters, B. D. Fields, ``Optimal state-determination by mutually unbiased measurements,'' \emph{Ann. Physics} \textbf{191} (1989), no. 2, 363--381.

\bibitem{OQBT} J. Xia and M. Xiong, ``On a Question of Babadi and Tarokh,'' \emph{IEEE Trans. Inform. Theory} \textbf{60} (2014), no. 11, 7355--7367.


\end{thebibliography}
\end{document}